\documentclass[12pt]{article}
\usepackage{amssymb,latexsym,amsmath,eufrak,amsthm,cite,pictex}
\usepackage[active]{srcltx}
\usepackage{hyperref}
\usepackage[inline]{enumitem}
\usepackage[T1]{fontenc}
\usepackage{mathrsfs} 
\usepackage{graphicx}
\usepackage{epstopdf}
\usepackage[inline]{enumitem}
\usepackage{tikz}
\usetikzlibrary{arrows, calc, patterns}
\usetikzlibrary{arrows.meta}
\usetikzlibrary{positioning}
\usetikzlibrary{bending}
\tikzset{>=triangle 45}
\allowdisplaybreaks[1]

\def\g{\gamma}

\def\m{\mu}
\def\s{\sigma}
\def\t{\tau}

\def\G{\Gamma}

\def\z{\zeta}

\def\Cs{Ces\'{a}ro}
\newcommand{\R}{{\mathbb R}}
\newcommand{\Z}{{\mathbb Z}}

\newcommand{\C}{{\mathbb C}}

\makeatletter
\newcommand{\inlineitem}[1][]{%
\ifnum\enit@type=\tw@
    {\descriptionlabel{#1}}
  \hspace{\labelsep}
\else
  \ifnum\enit@type=\z@
       \refstepcounter{\@listctr}\fi
    \quad\@itemlabel\hspace{\labelsep}
\fi}
\makeatother

\DeclareMathOperator{\im}{Im}
\theoremstyle{plain}

\newtheorem{lemma}{Lemma}
\newtheorem{proposition}{Proposition}
\theoremstyle{definition}

\theoremstyle{remark}

\def\Cs{Ces\'{a}ro}
\title{Estimates of $\G$-functions and their $\log{}$-derivatives}
\author{Miron Bekker and Boris Mityagin}
\date{}
\begin{document}
\maketitle
\begin{abstract}
We provide a lower bound for a weighted ratio of $\G-$functions and its $\log{}-$derivative. Then we apply this result to estimate  the norm of the operator associated with the {\Cs} operator in the Hardy space $H^p(\C_+)$, $1<p<\infty$.
\end{abstract}
{\bf MSC2020}: Primary 33B15. Secondary 30H10, 47B38\\
{\bf Key words and phrases}: $\G$-function and its log-derivative,\\
Hermite-Hadamard Inequality, Hardy Spaces, Cesaro operator.
\vskip 1.0truecm

The goal of this note is to prove a couple of inequalities related to $\G$-functions and their ($\log{}$) derivatives and to apply them to analysis of the norm
of the adjusted {\Cs} operator in Hardy spaces.

{\bf 1.} Let us define a smooth function
\begin{equation}\label{Phi-function}
  \Phi(x)=\begin{cases}
            2(1-\dfrac{1}{x})^{x-1}\dfrac{\G(1/2+x/2)\G(x)}{\G(1+x/2)\G(-1/2+x)}&x>1;\\
            \dfrac{4}{\pi}&x=1,
          \end{cases}
        \end{equation}
        on $[1,\infty)$.
        
        \begin{proposition}
          For $s\ge -1$, $s\ne 0$, the strict inequality
          \begin{equation}\label{Phi-R_inequality}
\log{\Phi(s+2)}>R(s),
\end{equation}
where
          \begin{equation}\label{R-function}
            R(s)=\frac{1}{24}\frac{s^2}{\kappa(\kappa+s)(\kappa+2s)}, \text{and}\quad \kappa=1+1/\sqrt{12},
          \end{equation}
 holds. Moreover, $\Phi(2)=1$.
\end{proposition}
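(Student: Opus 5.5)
Throughout, write $x=s+2\ge 1$. First I would check $\Phi(2)=1$ directly: at $x=2$ one has $2(1/2)^{1}\G(3/2)\G(2)/(\G(2)\G(3/2))=1$. Since $R(0)=0$, this means the function
\[
F(s)=\log\Phi(s+2)-R(s)
\]
satisfies $F(0)=0$. The strict inequality (\ref{Phi-R_inequality}) for $s\ne 0$ therefore follows if I show that $F'(s)<0$ on $(-1,0)$ and $F'(s)>0$ on $(0,\infty)$; continuity of $\Phi$ at $x=1$ handles the endpoint $s=-1$. As a sanity check at infinity, Stirling gives $\Phi(x)\to 2\sqrt2/e$, so $\log\Phi\to \log 2+\tfrac12\log 2-1\approx 0.0397$, while $R(s)\to 1/(48\kappa)\approx 0.016$. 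So there is room at infinity, and the delicate region is near $s=0$, where both sides vanish to second order.

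Differentiating gives
\[
\frac{d}{dx}\log\Phi(x)=\log\Bigl(1-\frac1x\Bigr)+\frac1x+\tfrac12\bigl[\psi(\tfrac{x+1}{2})-\psi(\tfrac{x}{2}+1)\bigr]+\psi(x)-\psi(x-\tfrac12),
\]
with $\psi=\G'/\G$. I would write every bracket as an integral of the trigamma function, for example $\psi(x)-\psi(x-\tfrac12)=\int_{x-1/2}^{x}\psi'(t)\,dt$, and similarly $\log(1-1/x)+1/x=-\int_{x-1}^{x}\bigl(\tfrac1t-\tfrac1x\bigr)dt$. Because $\psi'$ is completely monotone, hence convex, the Hermite--Hadamard inequality bounds each integral between its midpoint value and its trapezoid value. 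I would pick, for each term, the side that yields a lower bound on $F'$ when $s>0$ and an upper bound when $s<0$. This replaces the digamma differences by finitely many values of $\psi'$. These in turn I would control by the classical rational bounds
\[
\frac1t+\frac1{2t^2}<\psi'(t)<\frac1t+\frac1{2t^2}+\frac1{6t^3},
\]
or one more term of the asymptotic series if needed.

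After these substitutions, the sign condition on $F'(s)-R'(s)$ becomes an inequality between explicit rational functions of $s$, together with the single elementary term $\log(1-1/x)+1/x$. I would bound that term by its Taylor series $-\sum_{k\ge2}1/(kx^k)$, truncated with a tail of known sign. Clearing denominators then reduces each case, $s\in(-1,0)$ and $s>0$, to showing that a polynomial in $s$ has fixed sign; I would check this by examining its coefficients after a shift, or by a Sturm-type argument.

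The main obstacle I expect is the neighbourhood of $s=0$. There $F'(s)$ vanishes to first order, so the expansion of $\log\Phi(s+2)$ must match $R(s)=s^2/(24\kappa^3)+O(s^3)$ to the correct order, with room to spare. The constant $\kappa=1+1/\sqrt{12}$ is presumably tuned so that the cubic terms also cooperate. Crude Hermite--Hadamard bounds may lose too much there. If so, I would first try splitting each integration interval in half before applying Hermite--Hadamard, which improves the error by a factor of $4$. Failing that, I would treat a small interval around $s=0$ separately, using the exact Taylor expansion of $\log\Phi(s+2)$ at $s=0$, whose coefficients are given by polygamma values at half-integers, together with an explicit remainder bound.
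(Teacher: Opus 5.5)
\textbf{There is a genuine gap: the inequality you aim at is false as written, so the deferred sign checks would fail.} You read $R$ literally, with the factor $\kappa+2s$; this is why you get $R\sim s^2/(24\kappa^3)$ at $0$ and $R\to 1/(48\kappa)$ at infinity. You correctly named the second-order behaviour at $s=0$ as the crux, but never computed it. From your own formula for $(\log\Phi)'$,
\[
\frac{d^2}{ds^2}\log\Phi(s+2)\Big|_{s=0}=\frac14-\frac34\bigl(\psi'(\tfrac32)-\psi'(2)\bigr)=\frac52-\frac{\pi^2}{4}\approx 0.0326 .
\]
So $\log\Phi(s+2)\approx 0.0163\,s^2$ near $0$, which is \emph{smaller} than $s^2/(24\kappa^3)\approx 0.0195\,s^2$. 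Concretely, $\log\Phi(1.8)\approx 8.7\cdot 10^{-4}$, while the literal $R(-0.2)\approx 1.34\cdot 10^{-3}$. Moreover, the literal $R$ has a pole at $s=-\kappa/2\in(-1,0)$, so your $\log\Phi(s+2)-R(s)$ is not even defined on all of $(-1,0)$; that alone should have flagged a misprint. The polynomial sign checks you defer would therefore simply fail. The paper's proof, and its Proposition in Section 4, give instead $R(s)=\frac{1}{24}\frac{s^2}{\kappa(\kappa+s)(2\kappa+s)}$. This behaves like $s^2/(48\kappa^3)$ at $0$ and tends to $1/(24\kappa)$ at infinity.

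\textbf{Even with the corrected $R$, the key step is missing.} Your skeleton ($G(0)=0$ plus the sign of $G'$) matches the paper's: bound $\frac{d}{ds}\log\Phi(s+2)$ by $R'(s)$, then integrate from $0$. But the step carrying all the content is an unexecuted and fragile computation. Per-term Hermite--Hadamard and asymptotic $\psi'$ bounds leave errors that do not vanish at $s=0$, where both derivatives do. As $s\to\infty$, the two derivatives agree to leading order $1/(8s^2)$, with a margin of only about $\frac{7(\kappa-1)}{12s^3}$, the same order as those errors.

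The idea you are missing makes all of this unnecessary.
\begin{enumerate*}[label=(\roman*)]
\item Write $F(s)=\frac{d}{ds}\log\Phi(s+2)=\log\frac{1+s}{1+s/2}-\sum_{n\ge0}g(n;s)$ with $g(t;s)=\frac{1}{t+3/2+s/2}-\frac{1}{t+3/2+s}$.
\item Since $\log\frac{1+s}{1+s/2}=\int_{-1/2}^\infty g(t;s)\,dt$, $F(s)$ is the total midpoint-rule error: $F(s)=\int_{-1/2}^{\infty}k(t)g''(t;s)\,dt$, where $k(t)=\frac12(\frac12-|t|)^2$ is the $1$-periodic Peano kernel, with mean $1/24$.
\item $g''(\cdot;s)$ has fixed sign and is monotone. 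Compare $k$ with the level $k(\tau)=1/24$ on each period $[-\tau,1-\tau]+m$, where the areas above and below this level are equal. This gives $F(s)>-\frac1{24}g'(-\tau;s)$ for $s>0$, and the reverse inequality for $-1<s<0$.
\item Finally, $-\frac1{24}g'(-\tau;s)=\frac1{24}\bigl[(\kappa+s/2)^{-2}-(\kappa+s)^{-2}\bigr]$ is exactly $R'(s)$, with $\kappa=\frac32-\tau=1+1/\sqrt{12}$.
\end{enumerate*}
So $\kappa$ is not tuned to make cubic terms cooperate; it is where the Peano kernel crosses its mean, and no polynomial sign checks are needed.
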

{\bf Proof.} Applying the Legendre duplication formula to $\Phi(x)$ and taking the logarithmnic derivative of both sides (see \cite{AbSt}, formulas 6.1.18, 6.3.1, and 6.3.16; also Appendix, {\bf B}), we arrive to the function
\begin{gather}\label{log-dev1}
  F(s)\equiv\frac{d}{ds}\left(\log{\Phi(s+2)}\right)=\frac{\Phi^{\prime}(s+2)}{\Phi(s+2)}=\\
  \log{\left(\frac{1+s}{1+s/2}\right)}-\sum\limits_{n=0}^{\infty}g(n;s), \label{4}
\end{gather}
where
\begin{equation}\label{g-function}
  g(t;s)=\frac{1}{t+3/2+s/2}-\frac{1}{t+3/2+s},\quad -1<s<+\infty,\quad -\frac{1}{2}\le t<+\infty.
\end{equation}
Notice that the second derivative 
\begin{equation}\label{second_der}
\frac{d^2}{dt^2}g(t;s)=2\left[\frac{1}{(t+3/2+s/2)^3}-\frac{1}{(t+3/2+s)^3}\right]
\end{equation}
is well defined and is\\
(a) strictly positive on $[-3/2,+\infty)$ if $s>0$,
and\\
(b) strictly negative for $t\in[-1/2,+\infty) $ if $-1<s<0.$\\
Therefore 
\begin{subequations}\label{convexity}
\begin{align}
g(\cdot;s)\quad \text{is strictly convex on} \quad [-3/2, +\infty)\quad \text{if} \quad s>0; \label{convex}\\
g(\cdot; s) \quad \text{is strictly concave on}\quad [-1/2,+\infty) \quad \text{if}\quad -1<s<0. \label{concave}    
\end{align}
\end{subequations}
\vskip 1.0truecm
If $h(t)$ is a strictly convex (concave) function on the interval $[a,b]$ then 
\begin{equation}
h(\frac{a+b}{2})<\frac{1}{b-a}\int\limits_a^b h(t)dt,\tag{9a}\label{H-H_convex}\\
\end{equation}
respectively
\begin{equation}
h(\frac{a+b}{2})>\frac{1}{b-a}\int\limits_a^b h(t)dt,\tag{9b}\label{H-H_concave}.
\end{equation}
This is the Hermite-Hadamard inequality (see  \cite{HLP34}, Chapter 3, Statement 125, or \cite{NP18}, Sec.1.10, formula (1.32)).
Therefore, if $s>0$, for any $n\in\Z_+$ 
\begin{equation}
g(n;s)<\int\limits_{n-1/2}^{n+1/2}g(t;s)dt \tag{10a}\label{9a}
\end{equation}
and
\begin{equation*}
\sum\limits_{n=0}^{\infty}g(n;s)<\sum\limits_{n=0}^{\infty}\int\limits_{n-1/2}^{n+1/2}g(t;s)dt=\int\limits_{-1/2}^{\infty}g(t;s)dt=
\log{\left(\frac{1+s}{1+s/2}\right)},
\end{equation*}
and $F(s)>0$ if $s>0$.

If $-1<s<0$ we  apply the Hermite-Hadamard inequality \eqref{H-H_concave} and conclude by repeating an analogue of \eqref{9a} that 
\begin{equation}
F(s)=\log{\left(\frac{1+s}{1+s/2}\right)}-\sum\limits_{n=0}^{\infty}g(n;s)<0.\tag{10b}\label{9b}
\end{equation}
With $F(0)=0$ or $\Phi^{\prime}(2)=0$, and $\Phi(2)=1$, we can state as an intermidiate statement the following\\
{\bf Claim 2.} {\it Under the notations \eqref{Phi-function} 
\begin{equation}\label{claim2}
\log{\Phi(x)}\begin{cases}
>0&\text{if}\quad +1\le x<\infty, \quad x\ne 2,\\ \tag{11}
=0&\text{if}\quad x=2.
\end{cases}
\end{equation} 
}
\vskip 1.0truecm
{\bf 3.} But we want to go farther and make inequality \eqref{claim2} quantitative, i.e. put as the right hand side some elementary functions of $p$ or $s=p-2$.

Now we use Peano kernel formula of the midpoint integration rule. The gap
\begin{equation}\label{gup}
E(f):=\frac{1}{b-a}\int\limits_a^bf(x)dx-f\left(\frac{a+b}{2}\right)\tag{12}
\end{equation}
is a linear functional on $C^2([a,b])$, and it has a representation 
\begin{equation*}
E(f)=\int\limits_a^bK(t)f^{\prime\prime}(t)dt,
\end{equation*}
where
\begin{equation}\label{Peano_Kernel}
K(t)=
\begin{cases}
\dfrac{1}{2(b-a)}(t-a)^2,&a\le t\le \m:=\dfrac{a+b}{2},\\ \tag{13}
\dfrac{1}{2(b-a)}(t-b)^2,&\m\le t\le b
\end{cases}
\end{equation}
(see \cite{GP13}; \cite{RS11}, Sec. 13.2.2, formulas (13.49)-(13.51)).

Peano kernel on the interval $[-1/2,1/2]$ is 
\begin{equation*}
k(t)=\frac{1}{2}\left(\frac{1}{2}-|t|\right)^2,
\end{equation*}
it is strctly positive. 
Let us extend this function as a periodic one, with period $1$, i.e. 
\begin{equation*}
k(t+m)=k(t),\quad |t|\le 1/2, \quad m\in\Z_+, 
\end{equation*}
then $F(s)$ as a total gap \eqref{4} can be represented in the following way: 
\begin{equation*}
F(s)=\int\limits_{-1/2}^{\infty}k(t)g^{\prime\prime}(t;s)dt.
\end{equation*}
{\it Case $s>0$}. We have 
\begin{equation*}
(-1)^m\frac{d^m}{dt^m}g(t;s)>0,\quad m=0,1,2,3,
\end{equation*}
so $g^{\prime\prime}(\cdot;s)$ is monotone, strictly decreasing.  

Choose $\t$, $0<\t<1/2$, in such a way that areas of sets
\begin{equation*}
\left\{0\le t\le\t, k(\t)\le y \le k(t)\right\}
\end{equation*}
and 
\begin{equation*}
\left\{\t\le t\le 1/2, k(t)\le y \le k(\t)\right\}
\end{equation*}
are equal, i.e., with $k$ being even and $1-$periodic,
\begin{equation}\label{equal_areas}
\int\limits_{-\t}^{\t}[k(t)-k(\t)]dt=\int\limits_{\t}^{1-\t}[k(\t)-k(t)]dt\tag{14}
\end{equation}
Then
\begin{equation}\label{equal_area_2}
\frac{1}{2}\int\limits_0^{1/2}\left(\frac{1}{2}-t\right)^2dt=\frac{1}{2}\left(\frac{1}{2}-\t\right)^2\frac{1}{2} \tag{15}
\end{equation}
and 
\begin{equation}\label{tau-value}
\t=\frac{1}{2}\left(1-\frac{1}{\sqrt{3}}\right), \qquad k(\t)=\frac{1}{2}\left(\frac{1}{2\sqrt{3}}\right)^2=\frac{1}{24}. \tag{16}
\end{equation}
\begin{figure}[!h]
 \begin{center}
  \includegraphics[width=5in]{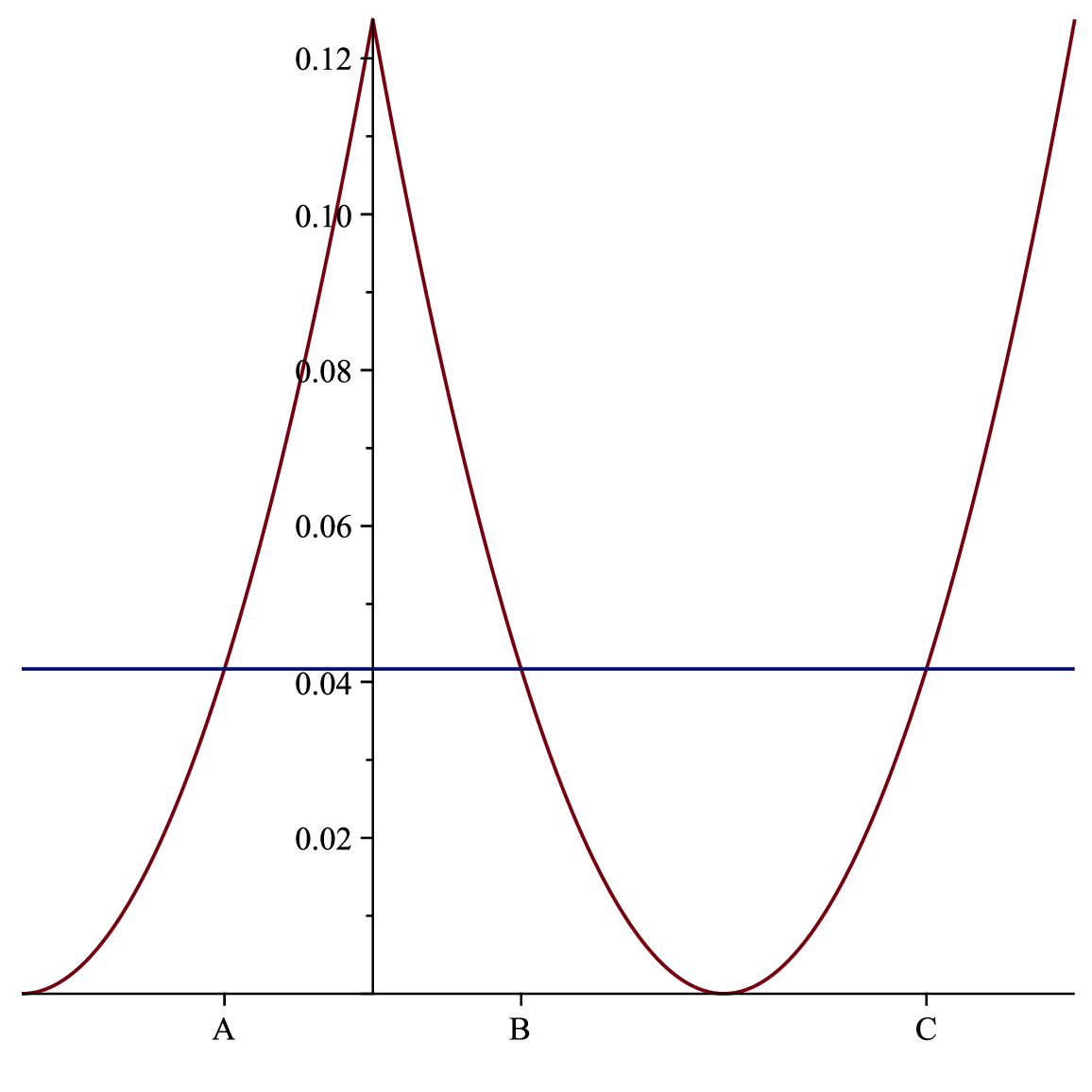}
 \end{center}
\caption{\small Extended Peano kernel on $[-1/2,1]$. Horizontal line represents $k(\t)=1/24$. Points $A,B,C$ correspond to values $-\t$, $\t,$ and $1-\t$, respectively, $\t=1/2(1-1/\sqrt{3})$.}
\end{figure}

Let us notice that if $(X,\mu)$, $(Y,\nu)$ are spaces with positive measures such that 
\begin{equation}\label{star}
\int_Xd\mu=\int_Yd\nu, \tag{17}
\end{equation}
 and $\exists c$ , $p(x)>c>q(y)$ $\forall x\in X$, $\forall y\in Y$, then
\begin{equation}\label{6.2}
\int\limits_Xp(x)d\mu>c\int\limits_Xd\mu=c\int\limits_Yd\nu>\int\limits_Yq(y)d\nu. \tag{18}
\end{equation}
This remark, with $X=(-\t,\t)$, $\mu=[h(x)-h(\t)]dx$,  $Y=(\t, 1-\t)$, $d\nu=[h(\t)-h(y)]dy$, $c=g^{\prime\prime}(\t)$, and \eqref{equal_areas} being \eqref{star}, where $p$ and $q$ coincide with $g^{\prime\prime}(t)$ on the corresponding intervals, leads to
\begin{equation*}
\int\limits_{-\t}^{\t}[k(t)-k(\t)]g^{\prime\prime}(t;s)dt>\int\limits_{\t}^{1-\t}[k(\t)-k(t)]g^{\prime\prime}(t;s)dt
\end{equation*}
or
\begin{equation*}
\int\limits_{-\t}^{1-\t}k(t)g^{\prime\prime}(t;s)dt>k(\t)\int\limits_{-\t}^{1-\t}g^{\prime\prime}(t;s)dt
\end{equation*}
The same holds for any shifted interval $[-\t, 1-\t]+m$, $m\in\Z_+$, so 
\begin{gather}\label{12.1}
F(s)=\int\limits_{-1/2}^{\infty}k(t)g^{\prime\prime}(t;s)dt>\tag{19} \\ 
\int\limits_{-\t}^{\infty}k(t)g^{\prime\prime}(t;s)dt>\frac{1}{24}\int\limits_{-\t}^{\infty}g^{\prime\prime}(t;s)dt=-\frac{1}{24}g^{\prime}(-\t).\nonumber
\end{gather}
Put $\kappa=3/2-\t=1+1/\sqrt{12}$. The straightforward calculation shows that 
\begin{equation*}
-g^{\prime}(-\t)=\frac{1}{(\kappa+s/2)^2}-\frac{1}{(\kappa+s)^2}
\end{equation*}
and
\begin{gather*}
\log{\Phi(s+2)}=\int\limits_0^sF(\s)d\s>\\
\frac{1}{24}\int\limits_0^s\left[\frac{1}{(\kappa+\s/2)^2}-\frac{1}{(\kappa+\s)^2}\right]d\s=\frac{1}{24}\frac{s^2}{\kappa(\kappa+s)(2\kappa+s)}.
\end{gather*}
\vskip 1.0truecm
{\it Case $s<0$}. The function $\g(t)=-g(t;s)$ is strictly convex on $[-1/2,\infty)$ by \eqref{concave}, and by \eqref{12.1}
\begin{equation}\label{12.2}
-F(s)=\int\limits_{-1/2}^{\infty}k(t)\left[-\frac{d^2}{dt^2}g(t;s)\right]dt>\frac{1}{24}g^{\prime}(-\t;s). \tag{20}
\end{equation}
Therefore,
\begin{gather*}
\log{\Phi(s+2)}=\int\limits_s^0[-F(\s)]d\s>\\
\frac{1}{24}\int\limits_s^0\left[\frac{1}{(\kappa+\s)^2}-\frac{1}{(\kappa+\s/2)^2}\right]d\s=\frac{1}{24}\frac{s^2}{\kappa(\kappa+s)(2\kappa+s)}.
\end{gather*}
\hfill Q.E.D.
\vskip 1.0truecm
It is interesting to compare functions $\log{\Phi(s+2)}$ and $R(s)$ given by \eqref{Phi-function}-\eqref{R-function} (see Figure 2). At infinity, by Stirling asympotic formula 
\begin{equation*}
\lim\limits_{s\to\infty}\log{\Phi(s+2)}=\frac{3}{2}\log{2}-1=0.039720771\ldots
\end{equation*}
and 
\begin{equation*}
\lim\limits_{s\to\infty}R(s)=\frac{1}{24\kappa}=\frac{1}{12(2+1/\sqrt{3})}=0.032332948\ldots
\end{equation*}
\begin{figure}[!h]
 \begin{center}
  \includegraphics[width=5in]{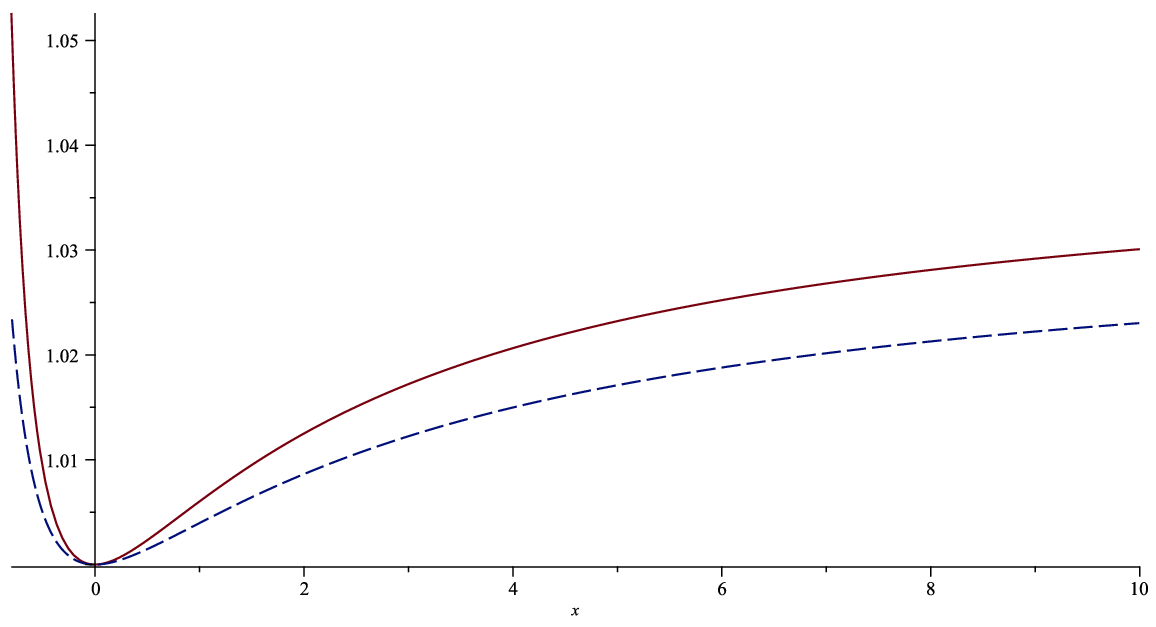}
 \end{center}
\caption{\small $\Phi(s+2)$( solid line) and $\exp{(R(s))}$ (dash line) for $-0.8\le s\le 10$}
\end{figure}
\vskip 1.0truecm
{\bf 4. Applications.} The function $\Phi(x)$,  defined by the formula \eqref{Phi-function}, appeared in \cite{ABC}, where the authors made an attempt to show that the norm  of the adjusted {\Cs} operator 
\begin{equation}\label{13}
V(p)=2\frac{p-1}{p}C-I,\quad 1<p<\infty,\quad p\ne 2 \tag{21}
\end{equation}
in Hardy space $H^p(\C_+)$, exceeds its spectral radius which is equal $1$. (To make our note self-contained we provide a few facts about {\Cs} operator 
\begin{equation}\label{14}
C:f\to \frac{1}{z}\int\limits_0^zf(\z)d\z \tag{22}
\end{equation}
in Appendix). As a step to this direction, in \cite{ABC} the test function 
\begin{equation}\label{15}
h(z)=\frac{1}{(i+z)^2}. \tag{23}
\end{equation}
was used.
The staightforward calculations (we leave them to a reader or refer to formulas (5)-(7)  in \cite{ABC}) lead to the following statement:
\begin{lemma} On the Hardy space $H^p(C_+)$, $1<p<\infty$, we have
\begin{equation*}
\Vert V(p)\Vert^p_p\ge \frac{\Vert V(p)h\Vert_p^p}{\Vert h\Vert^p_p}\ge \Phi(p) 
\end{equation*}
with equality only for $p=2$, where $\Phi$ is defined by \eqref{Phi-function}.
\end{lemma}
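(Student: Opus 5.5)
The plan is to compute $V(p)h$ explicitly, write both norms as one–dimensional integrals on the real line, and then bound the ratio from below by an elementary inequality that turns it into a product of Beta functions, which the duplication formula converts into $\Phi(p)$. The first inequality in the statement is just the definition of the operator norm, so only the second needs proof.

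First compute $Ch$ for $h(z)=(i+z)^{-2}$:
\begin{equation*}
Ch(z)=\frac1z\int_0^z\frac{d\zeta}{(i+\zeta)^2}=\frac1z\Bigl(\frac1i-\frac1{i+z}\Bigr)=\frac{-i}{i+z}.
\end{equation*}
With $a=2(p-1)/p$ this gives
\begin{equation*}
V(p)h(z)=\frac{(a-1)-iaz}{(i+z)^2}.
\end{equation*}
On the boundary $z=x\in\R$ we then get
\begin{align*}
\Vert h\Vert_p^p&=\int_\R(1+x^2)^{-p}\,dx=\sqrt\pi\,\frac{\G(p-1/2)}{\G(p)},\\
\Vert V(p)h\Vert_p^p&=\int_\R\frac{\bigl((a-1)^2+a^2x^2\bigr)^{p/2}}{(1+x^2)^{p}}\,dx .
\end{align*}
This reproduces formulas (5)--(7) of \cite{ABC}.

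The second step is the lower bound for the numerator integral. Dropping $(a-1)^2$ gives $a^p\int|x|^p(1+x^2)^{-p}dx$, which is a Beta integral but does not reproduce the factor $\G(1/2+p/2)/\G(1+p/2)$ in $\Phi$. That factor equals $\pi^{-1/2}\int_\R|x|^p(1+x^2)^{-p/2-1}dx$. I would therefore first try a convexity (Bernoulli/Young-type) pointwise inequality of the form
\begin{equation*}
\bigl((a-1)^2+a^2x^2\bigr)^{p/2}\ge c_p\,|x|^p(1+x^2)^{p/2-1},
\end{equation*}
with $c_p$ chosen so that equality is attained at a single tangency point when $p=2$ (where $a=1$ and both sides equal $x^2$). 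Optimizing over that point should produce the constant $2(1-1/p)^{p-1}$, up to the normalizing Beta factors; this is exactly where $(1-1/p)^{p-1}=(a/2)^{p-1}$ enters. If the pointwise route fails, the fallback is duality: estimate $\Vert V(p)h\Vert_p\ge|\langle V(p)h,g\rangle|/\Vert g\Vert_q$ with a test function $g$ in $H^q$ such as $g(x)=\bar x^{\,?}(x-i)^{-?}$, whose pairing and norm are again Beta integrals.

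After the lower bound is in place, dividing by $\Vert h\Vert_p^p$ and applying the Legendre duplication formula to $\G(p-1/2)$ and $\G(p)$ should give precisely $\Phi(p)$ as defined in \eqref{Phi-function}. At $p=2$ we have $a=1$ and all inequalities are equalities, consistent with $\Phi(2)=1$. For $p\neq2$ the inequality is strict because the pointwise bound is strict on a set of positive measure. The main obstacle is the second step, namely finding the right pointwise (or dual) inequality whose integral gives exactly the Gamma-ratio in $\Phi$; the remaining steps are routine Beta-function bookkeeping.
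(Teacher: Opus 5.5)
Your computations of $Ch=-i/(i+z)$, $V(p)h=((a-1)-iaz)/(i+z)^2$ and $\Vert h\Vert_p^p$ are correct; the factor $1/(2\pi)$ cancels in the ratio. The gap is in the key lower-bound step: the route you propose for it fails, and the duality fallback is never specified.

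The pointwise inequality $((a-1)^2+a^2x^2)^{p/2}\ge c_p|x|^p(1+x^2)^{p/2-1}$ is false.
\begin{itemize}
\item For $p>2$ the left side grows like $|x|^p$ and the right side like $|x|^{2p-2}$. So it fails for large $|x|$, whatever $c_p>0$ is.
\item For $p<2$, take the constant you actually need, $c_p=2(1-1/p)^{p-1}$. Already at $p=3/2$, $x=1$ the left side is $(5/9)^{3/4}\approx 0.64$ and the right side is $2\cdot 3^{-1/2}\cdot 2^{-1/4}\approx 0.97$.
\end{itemize}
So, as written, the proposal does not establish $\Vert V(p)h\Vert_p^p/\Vert h\Vert_p^p\ge\Phi(p)$.

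The missing idea is the step you discarded. Your claim that dropping $(a-1)^2$ does not reproduce $\Phi$ is a bookkeeping error:
\begin{equation*}
\frac{a^p\int_{\mathbb{R}}|x|^p(1+x^2)^{-p}\,dx}{\int_{\mathbb{R}}(1+x^2)^{-p}\,dx}
=a^p\,\frac{B\left(\frac{p+1}{2},\frac{p-1}{2}\right)}{B\left(\frac12,p-\frac12\right)}
=\frac{a^p\,\Gamma\left(\frac{p+1}{2}\right)\Gamma\left(\frac{p-1}{2}\right)}{\sqrt{\pi}\,\Gamma\left(p-\frac12\right)}.
\end{equation*}
Now use $a^p=2^p(1-1/p)^p$, $\Gamma(\frac{p-1}{2})=\frac{2}{p-1}\Gamma(\frac{p+1}{2})$, $\Gamma(1+\frac p2)=\frac p2\Gamma(\frac p2)$, and the duplication formula $\Gamma(p)=\pi^{-1/2}2^{p-1}\Gamma(\frac p2)\Gamma(\frac{p+1}{2})$. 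With these, the last expression is \emph{exactly} $\Phi(p)$.

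Hence the lemma follows from the trivial pointwise bound $((a-1)^2+a^2x^2)^{p/2}\ge a^p|x|^p$. This bound is strict for every $x$ when $a-1=(p-2)/p\neq 0$, i.e.\ $p\neq 2$, and is an identity at $p=2$. That gives both the inequality and the equality case. No tangency, optimization or duality argument is needed; this is presumably the ``straightforward calculation'' the paper leaves to the reader.
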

Now, with our results of Sections 1 and 2 above, we can claim the following.
\begin{proposition}
For the operator $V(p)$ defined by \eqref{13} in  Hardy space \\$H^p(\C_+)$, $1<p<\infty$, $p\ne 2$, the norm $\Vert V(p)\Vert_p>1$. Moreover 
\begin{equation*}
\Vert V(p)\Vert_p^p>\exp{(R(p-2))}
\end{equation*}
where 
\begin{equation*}
R(s)=\frac{1}{24}\frac{s^2}{\kappa(\kappa+s)(2\kappa+s)},\quad \kappa=1+1/\sqrt{12}.
\end{equation*}
\end{proposition}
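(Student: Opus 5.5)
This follows by chaining the Lemma with Proposition 1, using the substitution $s=p-2$. For $1<p<\infty$ we have $s=p-2\in(-1,\infty)$, and $p\neq 2$ means exactly $s\neq 0$, so $s$ lies in the range where Proposition 1 applies. The Lemma gives
\begin{equation*}
\Vert V(p)\Vert_p^p\ge \frac{\Vert V(p)h\Vert_p^p}{\Vert h\Vert_p^p}\ge \Phi(p)=\Phi(s+2)
\end{equation*}
with the test function $h$ from \eqref{15}. Proposition 1 gives $\log\Phi(s+2)>R(s)$, strictly. Since $\exp$ is increasing, $\Phi(p)>\exp(R(p-2))$, and combining this with the Lemma yields $\Vert V(p)\Vert_p^p>\exp(R(p-2))$. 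The strictness comes from Proposition 1, so the possible equality cases in the Lemma do not matter.

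For the first assertion $\Vert V(p)\Vert_p>1$, I will check that $R(s)>0$ for $s>-1$, $s\neq0$. The numerator $s^2$ is positive. For the denominator, $\kappa>1$ gives $\kappa+s>\kappa-1>0$ and $2\kappa+s>2\kappa-1>0$. Hence $\exp(R(p-2))>1$, so $\Vert V(p)\Vert_p^p>1$, and taking $p$-th roots with $p>0$ gives $\Vert V(p)\Vert_p>1$. Alternatively, Claim 2 already gives $\Phi(p)>1$ for $p\neq 2$, but for $p<1$... that is not needed here.

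I need to reconcile notation. The Proposition states $R$ with denominator $\kappa(\kappa+s)(2\kappa+s)$, whereas \eqref{R-function} writes $\kappa(\kappa+2s)$ in that factor. The proof of Proposition 1 actually produces $\frac{1}{24}\frac{s^2}{\kappa(\kappa+s)(2\kappa+s)}$, so I will use this form, consistent with the integral computed at the end of that proof. The positivity check above is made for this form.

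I expect no real obstacle. The only point needing care is confirming that the range $1<p<\infty$, $p\ne2$ maps into the hypothesis range $s\ge-1$, $s\ne0$ of Proposition 1 (it does, and even avoids the endpoint $s=-1$), and that the Lemma's chain of inequalities holds for all such $p$. Everything else is monotonicity of $\exp$ and of $t\mapsto t^{1/p}$.
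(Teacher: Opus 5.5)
Your proposal is correct and is exactly the paper's argument: the Lemma gives $\Vert V(p)\Vert_p^p\ge\Phi(p)$, and Proposition 1 with $s=p-2\in(-1,\infty)\setminus\{0\}$ supplies the strict bound $\Phi(p)>\exp(R(p-2))\ge 1$. You are also right that the denominator in \eqref{R-function} should read $\kappa(\kappa+s)(2\kappa+s)$; the final integration in the proof of Proposition 1 and the limit $1/(24\kappa)$ both confirm this.
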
 
It was proved in \cite{ABC1} that for $p=2$ the operator $V(2)$ is unitary.

{\bf 5. Appendix.} In this Appendix we provide a brief information about Hardy spaces $H^p(\C_+)$ in the upper half-plabe $\C_+=\{z:\im z>0\}$ and the {\Cs} operator in these spaces. For detailed information about Hardy spaces we refer a reader to \cite{Dur1} (Chapter 11),  \cite{Koo} (in particular, Chapter VI, Sect. C, p. 112 ). We also recall a couple of  identities related to $\G-$function and its $\log{}-$deraivative that were used above.

{\bf A.} The Hardy space $H^p(\C_+)$, $1\le p<\infty$ consists of functions $f(z)$, which are analytic in the upper half-plane $\C_+$ and staitsfy condition 
\begin{equation}\label{norm}
\left[\sup\left\{\frac{1}{2\pi}\int\limits_{-\infty}^{\infty}\vert f(x+iy)\vert^pdx:y>0\right\}\right]^{1/p}<\infty.\tag{24}
\end{equation}
The space $H^{\infty}(\C_+)$ consists of functions which are analytic and bounded in $\C_+$.  $H^{p}(\C_+)$ are Banach spaces. For $1\le p<\infty$ the norm is defined  by the left side of \eqref{norm},
and $\Vert f\Vert_{\infty}=\sup\left\{\vert f\vert: z\in\C_+\right\}$. For a function $f\in H^p(\C_+)$ the {\it nontangential} limit $\lim\limits_{y\to 0}f(x+iy)=f^*(x)$ exists for almost all $x$, and the function $f^*\in L^p(\R)$.  The function $f(z)$ can be restored from $f^*(x)$ with the help of Cauchy formula.

The {\Cs} operator on the Hardy space $H^p(\C_+)$ in the upper half-plane was introduced in \cite{AS}. It is defined by the expression \eqref{14}.
In \cite{AS} it was proved that for $1<p\le\infty$ the operator $C$ is bounded on $H^p(\C_+)$ (another proof was given in \cite{ABC1}) and for $1<p<\infty$ its spectrum $\s(C)$ is given by
\begin{equation}
 \s(C)=\{w:|w-\frac{p}{2(p-1)}|=\frac{p}{2(p-1)}\}.\tag{25}
\end{equation}
From the last formula it follows that the spectrum of the operator $V(p)$, defined by \eqref{13}, is the unit circle.

{\bf B.} 
1. Legendre duplication formula (\cite{AbSt}, formula 6.1.18)
\begin{equation*}
\G(2z)=(2\pi)^{-1/2}2^{2z-1/2}\G(z)\G(z+\frac{1}{2});
\end{equation*}
2.  A series representation of the logarithmic derivative of $\G-$function (\cite{AbSt}, formula 6.3.16)
\begin{equation*}
\frac{d}{dx}\log{\G(x+1)}=\psi(1+x)=-\g+\sum\limits_{n=1}^{\infty}\frac{x}{n(n+x)},\quad n\ne -1,-2 \ldots,
\end{equation*}
where $\g$ is the Euler's constant.

{\bf 6. Acknowledgement}. The authors are very thankful to Professor Marina Chugunova, Claremont Graduate University, for a discussion of a few topics of numerical analysis.
 
 Miron Bekker, Department of Mathematics, the University of \\Pittsburgh at Johnstown,
Johnstown, PA 15904, USA, {\it bekker@pitt.edu}\\

Boris Mityagin, Department of Mathematics, The Ohio State University, 231 West 18th
Ave, Columbus, OH 43210, USA\\
{\it mityagin.1@osu.edu, boris.mityagin@gmail.com}
\end{document}